\documentclass[10pt]{article}

\usepackage{amssymb,amsmath,amsopn,amsthm}
\usepackage[mathscr]{euscript}
\usepackage[margin=1.5in]{geometry}
\usepackage{graphicx}
\usepackage{float}
\usepackage{tikz-cd}
\usepackage[utf8]{inputenc}
\usepackage{hyperref}

\newtheorem{lemma}{Lemma}
\theoremstyle{definition}
\newtheorem{defn}{Definition}
\newtheorem*{thm:Comm}{Theorem \ref{thm:Comm}}
\newtheorem*{thm:C_infty}{Theorem \ref{thm:C_infty}}
\newtheorem{Theorem}{Theorem}

\theoremstyle{remark}

\title{Topological perspective on Statistical Quantities II}
\date{}
\author{Nissim Ranade}

\begin{document}
\maketitle

\section{Introcuction}

In the first part of this paper \cite{Top.persp} we discuss the relationship between the Boolean cumulants and $A_\infty$ morphisms. In this paper we will will extend that idea to the regular cumulants and $C_\infty$ morphisms. Cumulants can be defined in general for linear maps between commutative algebras which do not respect the algebraic structure. 

The first few cumulants for a map $e$ between two commutative algebras are defined as follows.
\[k_1(a) = e(a) \]
\[k_2(a,b) = e(ab)-e(a)e(b) \]
\[k_3(a,b,c) = e(abc)- e(ab)e(c) - e(a)e(bc) - e(ca)e(b) + 2e(a)e(b)e(c)\]

In general, $k_n$ is defined as follows. 

\[k_n(a_1,a_2,\ldots,a_n) = \sum_\pi(|\pi|-1)!(-1)^{|\pi|-1}\prod_{b\in \pi}e(\prod_{i\in b}a_i)  \]The sum is taken over all partitions of $\{1,\ldots,n\}$. Knowing the cumulants allows you to calculate the expectations of products. For example 
\[e(ab) = k_2(a,b)+k_1(a)k_1(b)\] 
\[e(abc) = k_3(a,b,c)+k_2(a,b)k_1(c)+k_2(b,c)k_1(a)+k_2(c,a)k_1(b)+k_1(a)k_1(b)k_1(c)\]

More generally cumulants can be defined using the above formulas for chain maps between differential graded commutative algebras. Furthermore for linear maps between $C_\infty$ algebras, cumulants can be defined up to homotopy.

Consider the differential forms $\Omega(M)$ on a manifold which form a differential graded commutative algebra. The commutative associative product can be transferred to a $C_\infty$ structure on the cochains $C(M)$ on the manilfold via a map $I$ that integrates each form appropriately on the chains. The map $I$ is the first term of a $C_\infty$ morphism between the two structures and the cumulants of $I$ can be defined up to homotopy. We have the following theorem that generalizes the theorem from the previous paper to $C_\infty$ morphisms.

\begin{thm:Comm}

Let $(A,d_A)$ and $(B,d_B)$ be two \textit{dgca}s. Let $p$ be a chain map from $A$ to $B$. Let $k_2$, $k_3$ and so on be the cumulants of $p$. Suppose $p$ is the first term of a $C_\infty$ morphism $(p, p_2, p_3, \ldots)$ where $p_n:A^{\otimes n}\rightarrow B$. Then the following statements hold.
\begin{itemize}
\item[i)] $p_2$ gives a homotopy from the second commutative cumulant $k_2$ to zero. All the higher cumulants $k_n$ are also homotopic to zero using maps created by $p_2$ and $p_1$.
\item[ii)] $p_3$ gives a homotopy between different ways of making $K_3$ homotopic to zero. For all the higher Boolean cumulants, homotopies between the multiple different ways of making them homotopic to zero are homotopic to each other using $p_3$, $p_2$ and $p_1$.
\item[iii)] In general any cycles that are created using the homotopies $\{p_j\}_{j=1}^n$ are made homotopic to zero using maps made by $\{p_j\}_{j=1}^{j+1}$.
\end{itemize}
\end{thm:Comm}

Much like in the case of an $A_\infty$ morphism between associative algebras, a $C_\infty$ morphism between \textit{dgca}s makes the cumulants completely collapse. We can generalize the above theorem to $C_\infty$ morphisms between $C_\infty$ algebras.

\begin{thm:C_infty}
Let $A$ and $B$ be two $C_\infty$ algebras. Let $p$ be a chain map from $A$ to $B$. Let $k_2$, $k_3$ and so on be the regular cumulants of $p$ defined up to homotopy. Suppose $p$ is the first term of an $C_\infty$ morphism $(p, p_2, p_3, \ldots)$ where $p_n:A^{\otimes n}\rightarrow B$. Then the following statements hold.
\begin{itemize}
\item[i)] $p_2$ gives a homotopy from the second cumulant $k_2$ to zero. All the different ways of defining the higher cumulants $k_n$ are also homotopic to zero using maps created by $p_2$ and $p_1$.
\item[ii)] $p_3$ gives a homotopy between different ways of making $k_3$ homotopic to zero. For all the higher cumulants, homotopies between the multiple different ways of making them homotopic to zero are homotopic to each other using $p_3$, $p_2$ and $p_1$.
\item[iii)] In general any cycles that are created using the homotopies $\{p_j\}_{j=1}^n$ are made homotopic to zero by maps made using $\{p_j\}_{j=1}^{n+1}$.
\end{itemize}

\end{thm:C_infty}

\section{$C_\infty$ Algebras and their morphisms}

\begin{defn}
An \textit{differential graded commutative algebra} or a \textit{dgca} is a differential graded algebra $(A,d,m)$ such that $m$ is graded commutative. That is \[m(a\otimes b) = (-1)^{|a||b|}m(b\otimes a)\]
\end{defn}

The differential forms on a manifold are an example of a \textit{dgca}. Just like $A_\infty$ algebras generalize the notion of differential graded associative algebras, the notion of a \textit{dgca} is generalized by $C_\infty$ algebras. A $C_\infty$ algebra is an $A_\infty$ algebra such that the maps $m_n$ satisfy certain equations involving $(q,r)$-shuffles, where $q+r=n$. 

\begin{defn}
A \textit{$(q,r)$-shuffle} is a permutation $\sigma$ of $(1,2,\ldots,q+r)$ such that
\begin{itemize}
\item if $1\leq i \leq j\leq q$, then $\sigma(i)\leq\sigma(j)$
\item if $q+1\leq i \leq j\leq q+r$, then $\sigma(i)\leq\sigma(j)$
\end{itemize} 
\end{defn}

\begin{defn}
A \textit{$C_\infty$ algebra} is an $A_\infty$ algebra $(A,m_1, m_2, \ldots)$ such that for every ordered pair of positive integers $(q,r)$ and $(x_1, x_2,\ldots,x_{q+r})$ where $x_i\in A$  
\[ m_{q+r}(\sum_{\sigma \in (q,r)-\text{shuffles}} (x_{\sigma^{-1}(1)},x_{\sigma^{-1}(2)},\ldots,x_{\sigma^{-1}(q+r)}))=0 \]
\end{defn}

\begin{defn}
A \textit{$C_\infty$ morphism} is an $A_\infty$ morphism $P=(p_1,p_2,\ldots)$ such that for every ordered pair of positive integers $(q,r)$ and $(x_1, x_2,\ldots,x_{q+r})$ where $x_i\in A$  
\[ p_{q+r}(\sum_{\sigma \in (q,r)-\text{shuffles}} (x_{\sigma^{-1}(1)},x_{\sigma^{-1}(2)},\ldots,a_{\sigma^{-1}(q+r)}))=0 \]
\end{defn}

\section{Transferring structure from differential forms to cochains}

Suppose $\Omega^*(M)$ is the algebra of differential forms on a smooth manifold $M$ and suppose $C^*(M)$ is the cochain complex corresponding to a certain fixed regular cell decomposition of $M$. This complex is the space of real valued functions on the chains of the regular cell decomposition. Consider the map $I$ defined as follows. For a differential form $\omega$ and a cell $\sigma$ of the cell decomposition
\[ I(\omega)(\sigma) = \int_{\sigma}\omega \] 

The cochains $C^*(M)$ have a canonical basis given by the cells of the decomposition. A basis element $\sigma^*$ corresponding to a cell $\sigma$ is a map such that
\[\sigma^*(\sigma) = 1\]
\[\sigma^*(\tau ) = 0\]
for every cell $\tau\neq\sigma$.

A map $i$ from $C^*(M)$ to $\Omega^*(M)$ can be constructed to have the following properties.
\begin{itemize}
\item[1)] $i(\sigma^*)$ integrates to $1$ on $\sigma$.
\item[2)] $i(\sigma^*)$ is supported only in a small neighborhood of the interior of $\sigma$. 
\item[3)] $i(\sigma^*)$ integrates to zero on all cells that have the same dimension as $\sigma$ but are not $\sigma$.
\end{itemize}

The map $i$ is an inclusion of the cochains into differential forms. The map $I\circ i$ is equal to identity on $C^*(M)$ and $ i\circ I$ is homotopic to identity on $\Omega^*(M)$. The homotopy $h$ is a map of degree $-1$ on $\Omega^*(M)$ such that \[dh+hd = i\circ I - id\] $h$ can be constructed inductively on cells and then glued together on the whole manifold. Given a choice of $i$ and $h$ the wedge product on the forms can be transferred to give an $A_\infty$ structure on $C^*(M)$.\cite{konstevich-soibelman}\cite{Merkulov} Further since the differential forms are graded commutative, we can pick the transferred structure to be $C_\infty$. \cite{homotopy-comm-transfer}

Now suppose $D^*(M)$ is another cochain complex corresponding to a finer cell decomposition of the original cell decomposition. Every cell of the original cell decomposition can be written as a union of cells of the finer cell decomposition. Thus there is a map $p$ from $D^*(M)$ to $C^*(M)$. There are projections $p_D$ and $p_C$ from the differential forms to $D^*(M)$ and $C^*(M)$ respectively given by integrating the forms of the cells of the complexes. 

\begin{equation}
\begin{tikzcd}
 {} & D^*(M) \arrow{d}{p}  \\
  \Omega^*(M) \arrow{r}{p_C}\arrow[dashed]{ur}{p_D} & C^*(M)
\end{tikzcd}
\end{equation}

We can pick inclusions (right inverses) of $D^*(M)$ and $C*(M)$ into the differential forms. However, the transfer maps for the transferred structure might not necessarily commute. For the maps to commute it is necessary to pick the inclusions and the homotopies appropriately. We first transfer the multiplicative structure of $\Omega^*(M)$ to an $A_\infty$ structure on $D^*(M)$. For this we pick an inclusion $i_D$ and a homotopy $h_D$ such that $dh_D+h_Dd = i_D\circ p_D - id $

\[
\begin{tikzcd}
\arrow[loop left]{l}{h_D} \arrow[r,"p_D"] \Omega^*(M)  & \arrow[l,bend left,"i_D"] D^*(M)
\end{tikzcd}
\]

Since the map $p$ is a quasi-isomorphism and since we are working over field coefficients there is an inverse quasi-isomorphism $i$ up to homotopy. Also since $p$ is a projection $i$ can be picked to be an inclusion such that $p\circ i$ is identity on $C^*(M)$. For this inclusion we can pick a homotopy $h$ on $D^*(M)$ such that $dh+hd = i\circ p$. We can transfer the structure from $D^*(M)$ to $C^*(M)$ using $i$ and $h$. 

\[
\begin{tikzcd}
\arrow[loop left]{l}{h} \arrow[r,"p"] D^*(M)  & \arrow[l,bend left,"i"] C^*(M)
\end{tikzcd}
\]

Consider the map $i_C = i_D\circ i$ which is an inclusion of $C^*(M)$ into the differential forms $\Omega$.

\begin{lemma}
$h_C = i_D\circ h\circ p_D + h_D$ is a homotopy from $i\circ p$ to identity. That is $dh_C+h_Cd = i\circ p+id$.
\end{lemma}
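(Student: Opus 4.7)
The plan is a direct chain-level calculation. I would substitute $h_C = i_D \circ h \circ p_D + h_D$ into $dh_C + h_C d$ and reduce using the two homotopy identities already in hand, namely $dh + hd = i \circ p - \mathrm{id}$ for $h$ on $D^*(M)$ and $dh_D + h_D d = i_D \circ p_D - \mathrm{id}$ for $h_D$ on $\Omega^*(M)$, together with the fact that $i_D$ and $p_D$ are chain maps (and hence commute with $d$ on the nose).

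First I would split
\[
dh_C + h_C d = \bigl( d(i_D \circ h \circ p_D) + (i_D \circ h \circ p_D) \circ d \bigr) + \bigl( dh_D + h_D d \bigr).
\]
Since $i_D$ and $p_D$ have degree zero and commute with $d$, the first bracket collapses to $i_D \circ (dh + hd) \circ p_D$. Substituting the homotopy identity for $h$ gives $i_D \circ i \circ p \circ p_D - i_D \circ p_D$, which is $i_C \circ p_C - i_D \circ p_D$ upon recalling $i_C = i_D \circ i$ and $p_C = p \circ p_D$. The second bracket equals $i_D \circ p_D - \mathrm{id}$ by the defining property of $h_D$, so the two copies of $i_D \circ p_D$ cancel and what remains is $i_C \circ p_C - \mathrm{id}$, the desired homotopy identity on $\Omega^*(M)$.

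The only step requiring any care is the sign bookkeeping when commuting $d$ past $i_D$ and $p_D$, but because both are genuine chain maps of degree $0$ no signs appear and the cancellation of the $i_D \circ p_D$ terms is transparent. I do not anticipate any substantive obstacle: the lemma is a short algebraic consequence of the two already-established homotopy equations combined with the chain-map property of the projection and inclusion between $\Omega^*(M)$ and $D^*(M)$.
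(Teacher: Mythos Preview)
Your proposal is correct and follows essentially the same computation as the paper: split $dh_C + h_Cd$ into the $i_D\circ h\circ p_D$ piece and the $h_D$ piece, push $d$ through $i_D$ and $p_D$ using that they are chain maps, apply the two homotopy identities, and observe that the $i_D\circ p_D$ terms cancel to leave $i_C\circ p_C - \mathrm{id}$. There is no substantive difference between your argument and the paper's.
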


\begin{proof}
\[dh_C+h_Cd = d(i_D\circ h\circ p_D) + (i_D\circ h\circ p_D)d + dh_D+h_Dd \] Since $p_D$ and $i_D$ are chain maps this is equal to 
\[i_D\circ dh\circ p_D + i_D\circ hd\circ p_D + dh_D+h_Dd \] Since $h$ and $h_D$ are homotopies this is equal to
\[i_D\circ i\circ p\circ p_D - i_D\circ p_D + i_D\circ p_D - id \]
\[=i_C\circ p_C - id\]
\end{proof}

Thus we can transfer the structure from the differential forms directly to $C^*(M)$. 

\[
\begin{tikzcd}
\arrow[loop left]{l}{h_C} \arrow[r,"p_C"] \Omega^*(M)  & \arrow[l,bend left,"i_C"] C^*(M)
\end{tikzcd}
\]

\begin{lemma}
The $A_\infty$ structure on $C^*(M)$ that is transferred from $\Omega^*(M)$ is the same as the one transferred from $D^*(M)$. 
\end{lemma}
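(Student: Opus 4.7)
The plan is to use the tree formula for the homotopy transfer of an $A_\infty$ structure (Kontsevich--Soibelman / Merkulov). On the direct side, $m_n^C$ is a sum over planar rooted binary trees $T$ with $n$ leaves whose leaves carry $i_C$, trivalent internal vertices carry the wedge product of $\Omega^*(M)$, internal edges carry $h_C$, and root carries $p_C$. On the two-step side, $\tilde m_n^C$ is a sum over trees $T'$ whose arity-$k$ internal vertices carry $m_k^D$, whose edges carry $h$, whose leaves carry $i$, and whose root carries $p$; I would then expand each $m_k^D$ using its own tree formula with decorations $i_D$, $h_D$, $p_D$ and $\wedge$, turning the two-step formula into a sum over pairs $(T',\{S_v\})$, where $S_v$ is a binary tree replacing each internal vertex $v$ of $T'$.

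The next step is to substitute the subtrees $S_v$ into $T'$ to obtain a single planar binary tree $T$ with $n$ leaves. Reading off the decorations shows that at each leaf of $T$ the decoration becomes $i_D \circ i = i_C$; at the root it becomes $p \circ p_D = p_C$; at each internal edge sitting inside some $S_v$ the decoration is $h_D$; and at each internal edge coming from the outer skeleton $T'$ the decoration is $i_D \circ h \circ p_D$, because the edge runs from the root of one subtree (capped by $p_D$), through $h$ on $T'$, into the leaf of the next subtree (capped by $i_D$). On the direct side, I would use the previous lemma to substitute $h_C = i_D h p_D + h_D$ and expand multilinearly on each internal edge of $T$, producing a sum over pairs $(T,E)$ where $E$ is the subset of internal edges labeled $i_D h p_D$ and the complementary edges are labeled $h_D$.

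The proof would then finish by exhibiting a bijection between the two indexing sets: from $(T',\{S_v\})$ one forms $T$ by inserting $S_v$ in place of $v$ and takes $E$ to be the edges coming from $T'$; conversely, from $(T,E)$ one contracts every internal edge of $T$ not in $E$, the connected components being the $S_v$ and the quotient being $T'$. Under this bijection the two tree contributions are identical integrands, so $m_n^C = \tilde m_n^C$ for every $n$.

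The main obstacle will be verifying that this substitute-and-contract bijection is compatible with planar orderings, arity assignments, and the Koszul signs that enter the $A_\infty$ transfer formula. Because those signs depend only on the decorated shape of the final tree $T$ and not on the route by which it was produced, the check should be essentially formal, but it must be carried out carefully to ensure that each pair $(T',\{S_v\})$ and the corresponding pair $(T,E)$ contribute with the same sign.
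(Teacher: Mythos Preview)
Your proposal is correct and follows essentially the same approach as the paper's own proof: both arguments use the Kontsevich--Soibelman/Merkulov tree formula, substitute the tree expression for $m_k^D$ into the vertices of the outer transfer tree, and match the result against the direct transfer by expanding $h_C = i_D\, h\, p_D + h_D$ multilinearly on internal edges. Your write-up is simply a more explicit version of the paper's terse, figure-based argument, spelling out the bijection between pairs $(T',\{S_v\})$ and pairs $(T,E)$ that the paper leaves implicit; the sign/planarity check you flag as the remaining obstacle is exactly the sort of formal bookkeeping the paper also takes for granted.
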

\begin{proof}
Recall that the formula for the transferred structure using $i$ and $h$ is as follows. 

\begin{figure}[H]
\centering \includegraphics[width=5in]{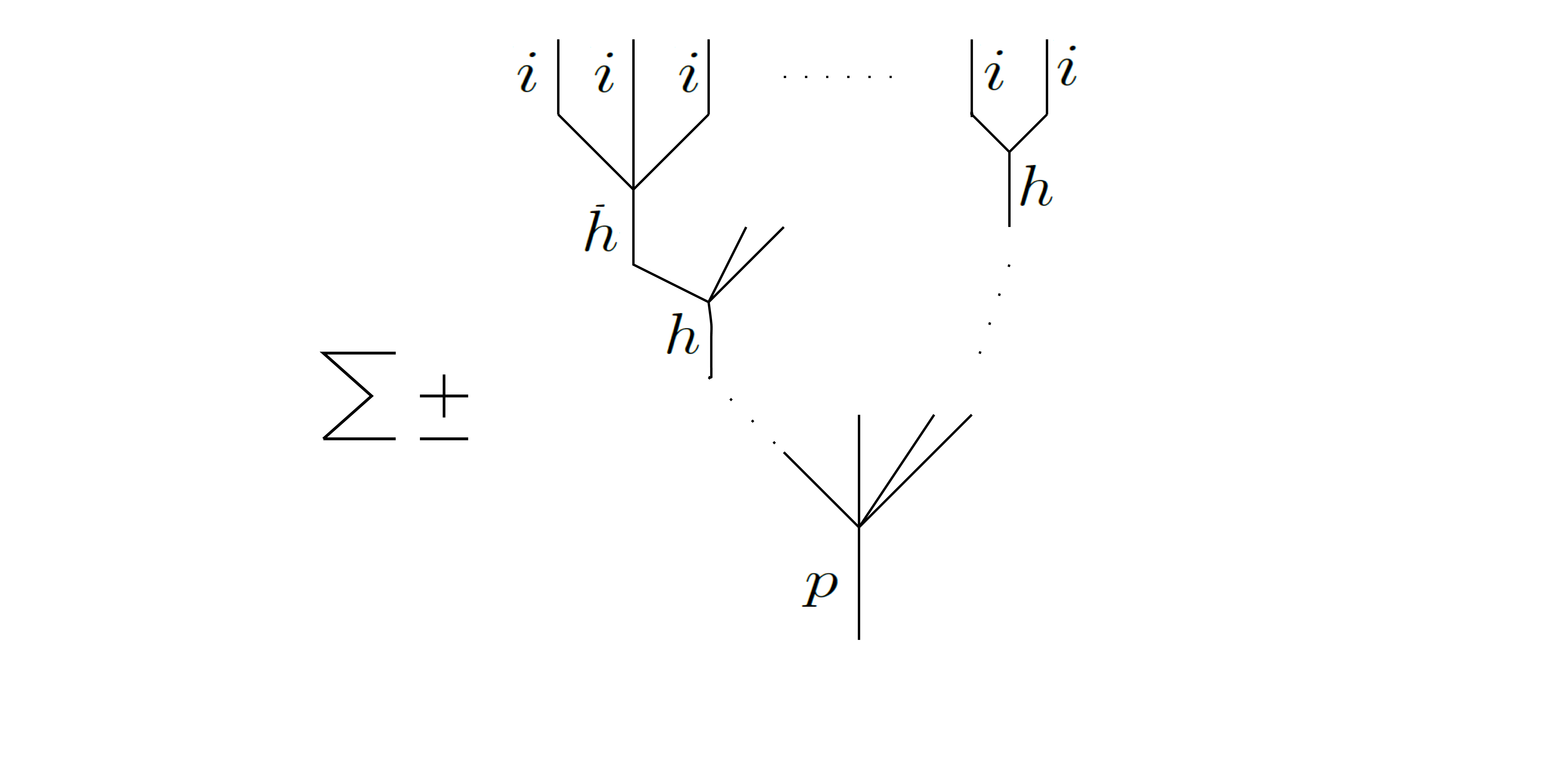}
\caption{\label{m_n}}
\end{figure}

In the above diagram the nodes of the trees correspond to the maps $m_n$ in the structure transferred on $D^*(M)$ from the differential forms. The formulas for these are given by 

\begin{figure}[H]
\centering \includegraphics[width=5in]{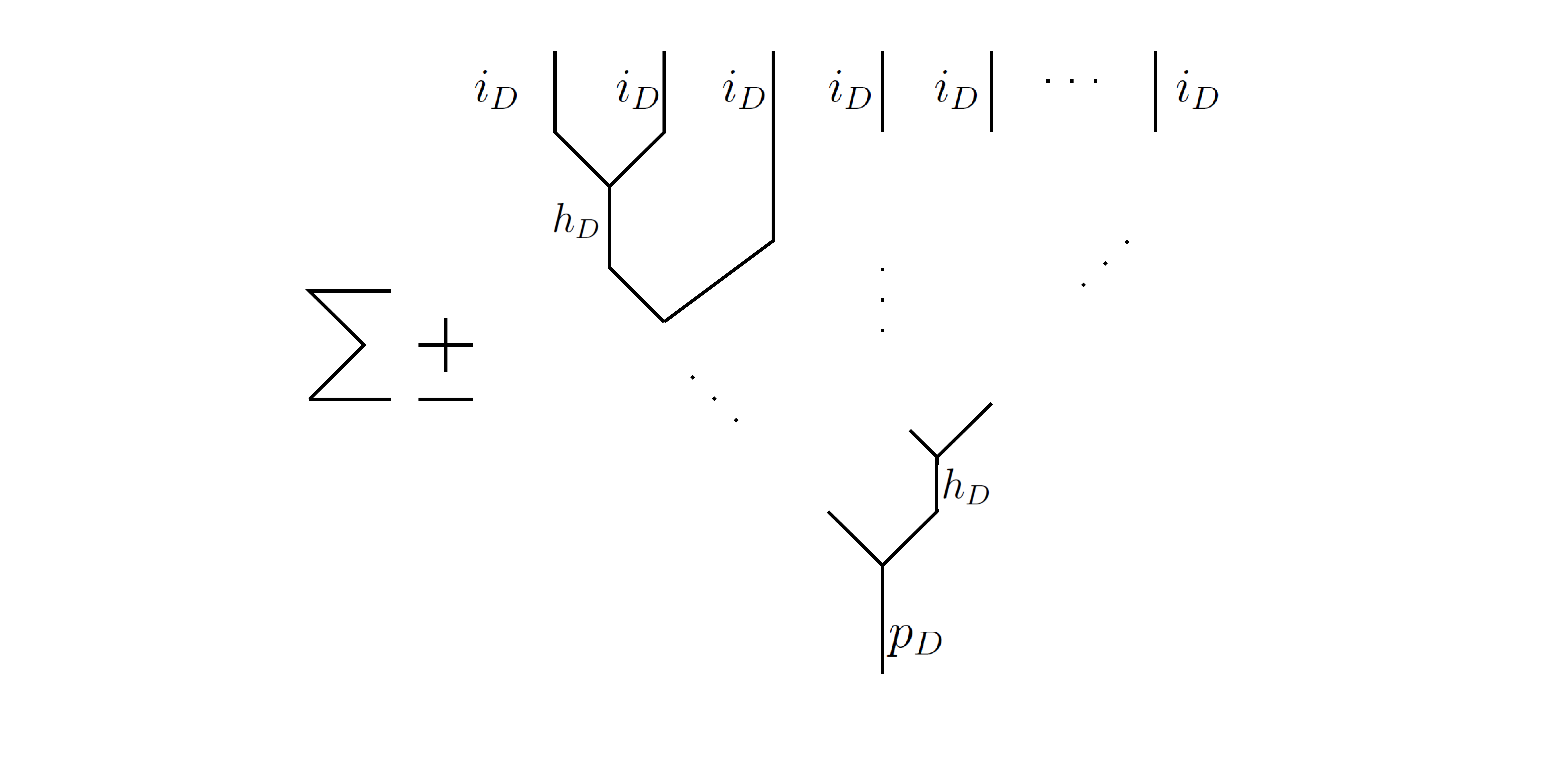}
\caption{}
\end{figure}
Similarly the formulas for the transferred structure on $C^*(M)$ from the differential forms is 

\begin{figure}[H]
\centering \includegraphics[width=5in]{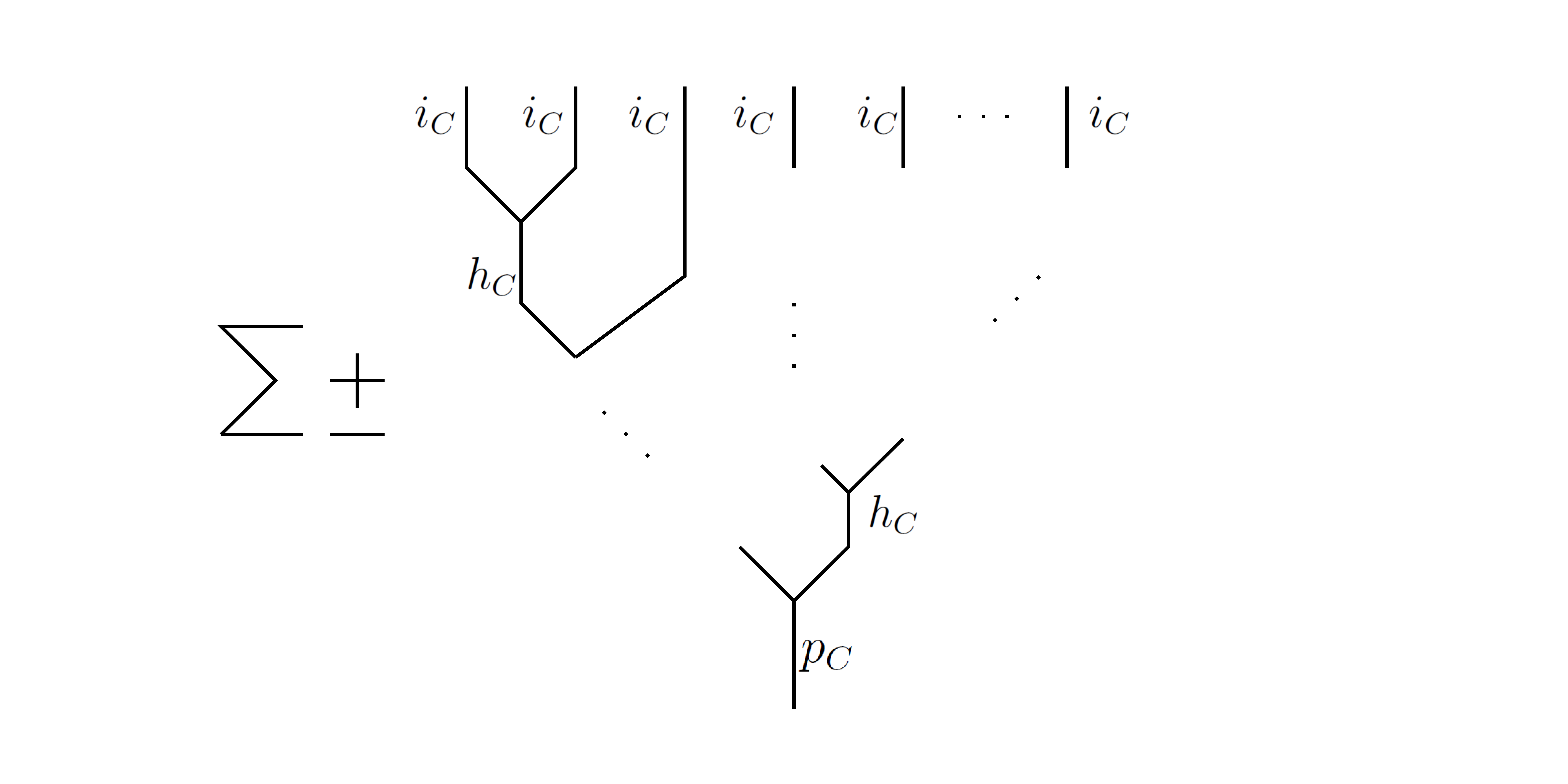}
\caption{}
\end{figure}

Since $h_C = i_D\circ h\circ p_D + h_D$ and $i_C = i\circ i_D$, and also since $p_D\circ i_D = id$ we get that the above sum is obtained by replacing the nodes in the first diagram by the trees in the second diagram.

\end{proof}

Thus given a finite tower of cochain complexes $C^*_1(M)$, $C^*_2(M)$, and so on, where $C^*_n(M)$ correspond to a finer cell decomposition than $C^*_{n-1}(M)$ we can transfer the graded commutative-associative structure from the differential forms to the tower in a compatible way. 

\section{$C_\infty$ morphism between \textit{dgca}s}

Suppose $A$ and $B$ are two \textit{dgca}s and $(p_1,p_2,\ldots)$ is a $C_\infty$ morphism from $A$ to $B$. Recall that the commutative cumulants are defined as follows.
\[k_n(a_1,a_2,\ldots,a_n) = \sum_\pi(|\pi|-1)!(-1)^{|\pi|-1}\prod_{b\in \pi}p(\prod_{i\in b}a_i)  \] Recall that $k_2$ is the same as the Boolean cumulant $K_2$ and thus from the first part of this paper \cite{Top.persp} it follows that $k_2$ is the boundary of the map $p_2$. 

\begin{lemma}
The commutative cumulants can be describes as boundaries of maps described using $p_2$
\end{lemma}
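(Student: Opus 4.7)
The plan is to prove this directly from the single identity $k_2 = \partial p_2$ established in Part I, without any induction on $n$. Writing the multiplication in $A$ as a chain map $m_A$, that identity reads $p \circ m_A - m_B \circ (p \otimes p) = \partial p_2$ as maps $A^{\otimes 2} \to B$. Iterating it (splitting off one factor at a time), I would express each $k$-fold moment $p(a_{i_1} \cdots a_{i_k})$ as the pure product $p(a_{i_1}) \cdots p(a_{i_k})$ plus the boundary of an explicit map $H_b$ built out of $p_1$, $p_2$, $m_A$, and $m_B$, one such relation per subset $b = \{i_1, \ldots, i_k\}$ of $\{1, \ldots, n\}$.

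Next I would substitute these expansions into the defining formula
\[ k_n(a_1, \ldots, a_n) = \sum_\pi (|\pi|-1)!(-1)^{|\pi|-1} \prod_{b \in \pi} p\Big(\prod_{i \in b} a_i\Big). \]
Each factor in the product over a block now splits as (product of single $p(a_i)$'s) plus a boundary. Since $m_B$ and $p_1$ are chain maps, the Leibniz rule in the Hom complex converts any product containing at least one boundary factor (with the rest being chain maps) into a single boundary of a map built from $p_1, p_2$. The residual pure part $\prod_b \prod_{i \in b} p(a_i)$ equals $\prod_i p(a_i)$ up to the Koszul signs already built into the cumulant formula, and in particular is independent of $\pi$. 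Collecting,
\[ k_n = \Big(\sum_\pi (|\pi|-1)!(-1)^{|\pi|-1}\Big)\prod_i p(a_i) + \partial H_n, \]
with $H_n$ assembled from the pieces above. The scalar coefficient vanishes for $n \geq 2$ by a classical partition identity, verified instantly by applying the cumulant formula to the identity chain map $p = \mathrm{id}: A \to A$ at $a_1 = \cdots = a_n = 1$: the left side is zero because a genuine algebra map has all higher cumulants zero, while the right side is precisely the sum in question times $1$. Hence $k_n = \partial H_n$ as claimed.

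The chief obstacle is bookkeeping, not conceptual. Koszul signs must be tracked carefully when $\partial$ is moved past tensor products and past post-composition with $m_A$, $m_B$, and $p_1$; the needed cancellations rely on all three being chain maps, and it is exactly these cancellations that yield the clean boundary identities above. In addition, the iteration $p(xy) = p(x)p(y) + \partial p_2(x,y)$ can be performed in several orders, producing superficially different $H_b$'s, so one must fix a convention (for example, always peel off the leftmost factor) and check that alternative orderings differ by boundaries that can be absorbed into $H_n$. Neither step is deep; the entire algebraic content reduces to the single relation $k_2 = \partial p_2$ combined with the partition-coefficient identity.
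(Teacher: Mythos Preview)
Your argument is correct and takes a genuinely different route from the paper. The paper argues combinatorially: it builds a graph $G_n$ whose vertices are the terms of $k_n$ (repeated according to the absolute value of the coefficient) and whose edges join partitions that differ by splitting a single block in two; adjacent terms then differ by one instance of $k_2=\partial p_2$ and carry opposite signs, and since $G_n$ is connected with an even number of vertices one pairs the terms along edges to exhibit $k_n$ as a sum of such boundaries. You instead iterate $k_2=\partial p_2$ to reduce every block-moment $p\big(\prod_{i\in b}a_i\big)$ to the fully factored product $\prod_{i\in b}p(a_i)$ modulo an explicit boundary, and then kill the surviving scalar by the partition identity $\sum_\pi(|\pi|-1)!(-1)^{|\pi|-1}=0$. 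Your method is cleaner and more self-contained for this lemma in isolation; the paper's graph $G_n$, however, is not incidental---it reappears as the $1$-skeleton of the cube complex $c_n$ in the proof of Theorem~\ref{thm:Comm}, so the combinatorial description is doing double duty there. One small refinement: when you expand $\prod_b(P_b+\partial H_b)$, cross terms can contain several $\partial H_b$ factors, not ``one boundary with the rest chain maps'' as you phrase it; the step still goes through because each $\partial H_b$ is itself a degree-zero cycle, so choosing any one $H_{b_0}$ and observing that the product of all remaining factors is a cycle gives $(\partial H_{b_0})\cdot(\text{cycle})=\partial\big(H_{b_0}\cdot\text{cycle}\big)$, but this deserves an explicit sentence.
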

\begin{proof}
Note that the coefficients of $k_n$ are integers that must add up to zero. Also each term in $k_n$ corresponds to all partitions of $n$. Like in the previous section we associate a graph $G_n$ whose vertices correspond to the terms of $k_n$ with multiplicities. Edges go between a vertex $\alpha$ and $\beta$ if the partition corresponding to $\beta$ can be obtained from the partition corresponding to $\alpha$ by splitting one of its subsets into two. Note that since the coefficients of $k_n$ add up to zero, $G_n$ has even number of vertices. Also note that it is a connected graph. Any two terms corresponding to adjacent vertices in $G_n$ are homotopic to each other via $p_2$ and occur in $k_n$ with opposite signs. Thus we can take pairs of terms with opposite signs in $k_n$ that are homotopic to each other and use that to describe $k_n$ as a boundary.  

\end{proof}

The third cumulant $k_3$ is given by the formula 
\[k_3(a,b,c) = p(abc) - p(ab)p(c) - p(bc)p(a) - p(ca)p(b) + 2p(a)p(b)p(c)\] Thus the corresponding graph is
\begin{figure}[H]
\centering \includegraphics[width=5in]{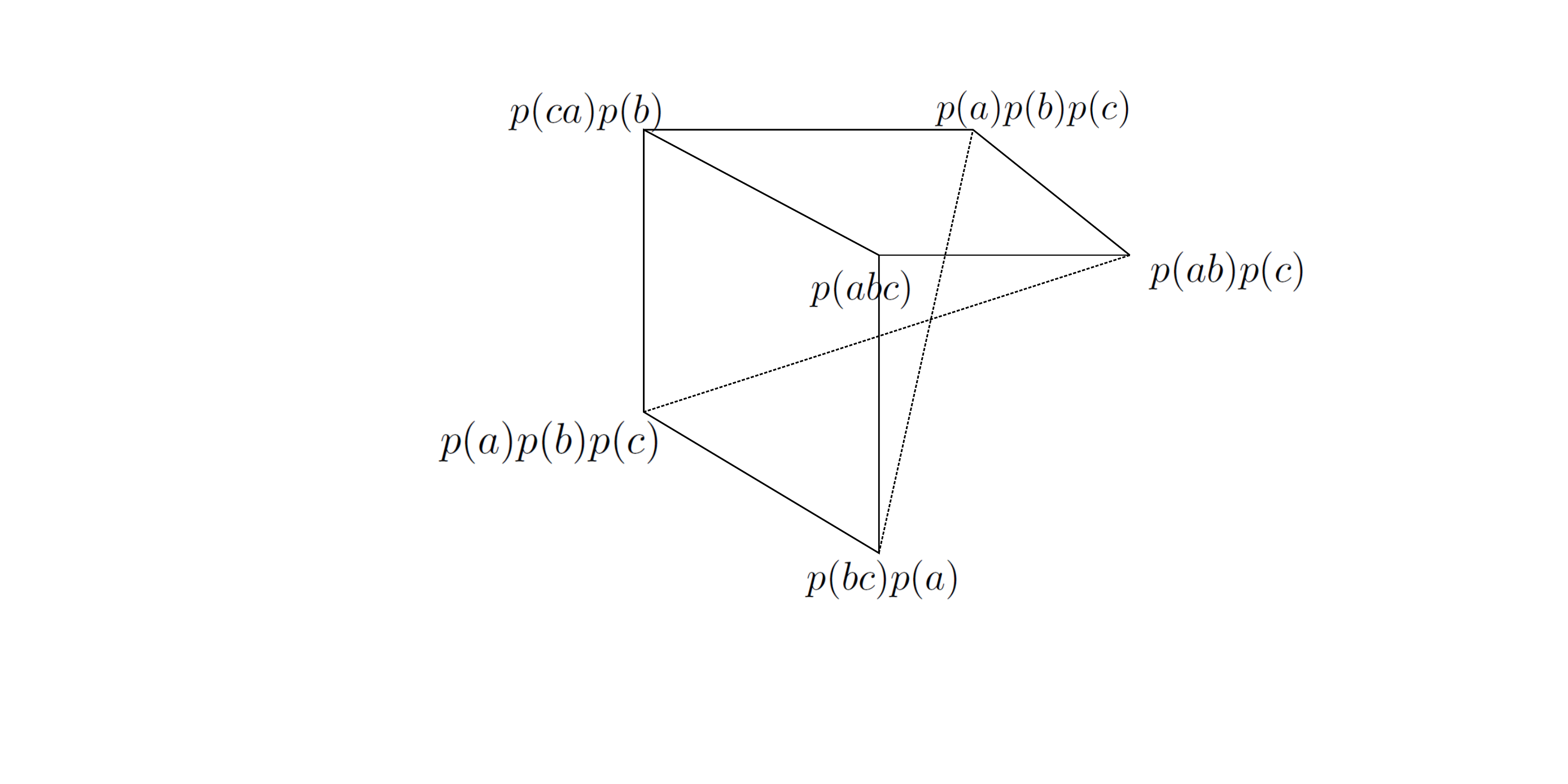}
\caption{}
\end{figure}

This is analogous to the discussion about making graphs associated to Boolean cumulants as described in the previous part of this paper \cite{Top.persp}. 

\begin{figure}[H]
\centering \includegraphics[width=5in]{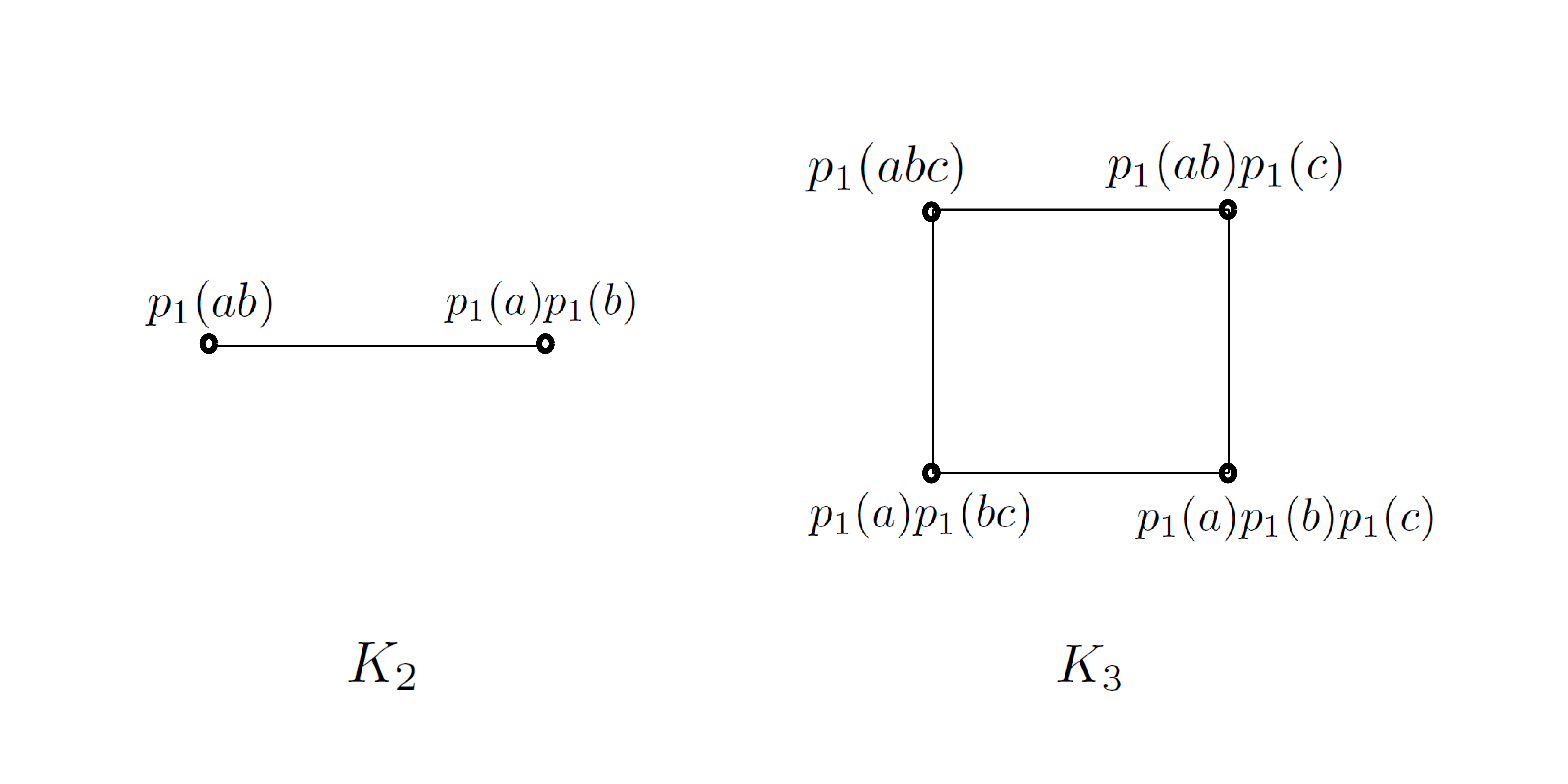}
\caption{}
\end{figure}

We further proved that such graphs can be made homotopic to zero using cells corresponding to maps that are part of the $A_\infty$ structure. In particular the two cells of such a complex correspond to

\begin{figure}[H]
\centering \includegraphics[width=5in]{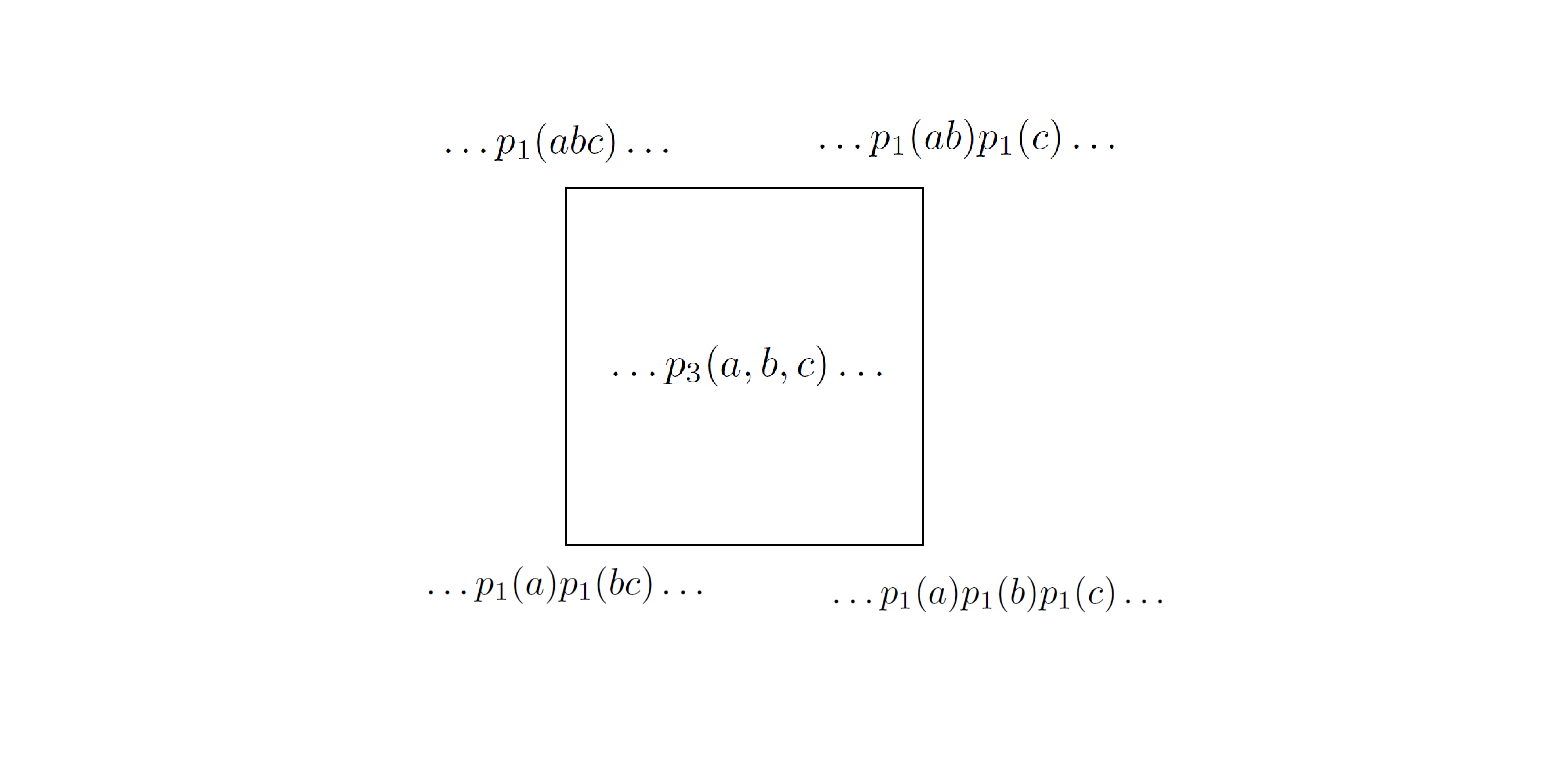}
\caption{\label{p3}}
\end{figure}

\begin{figure}[H]
\centering \includegraphics[width=5in]{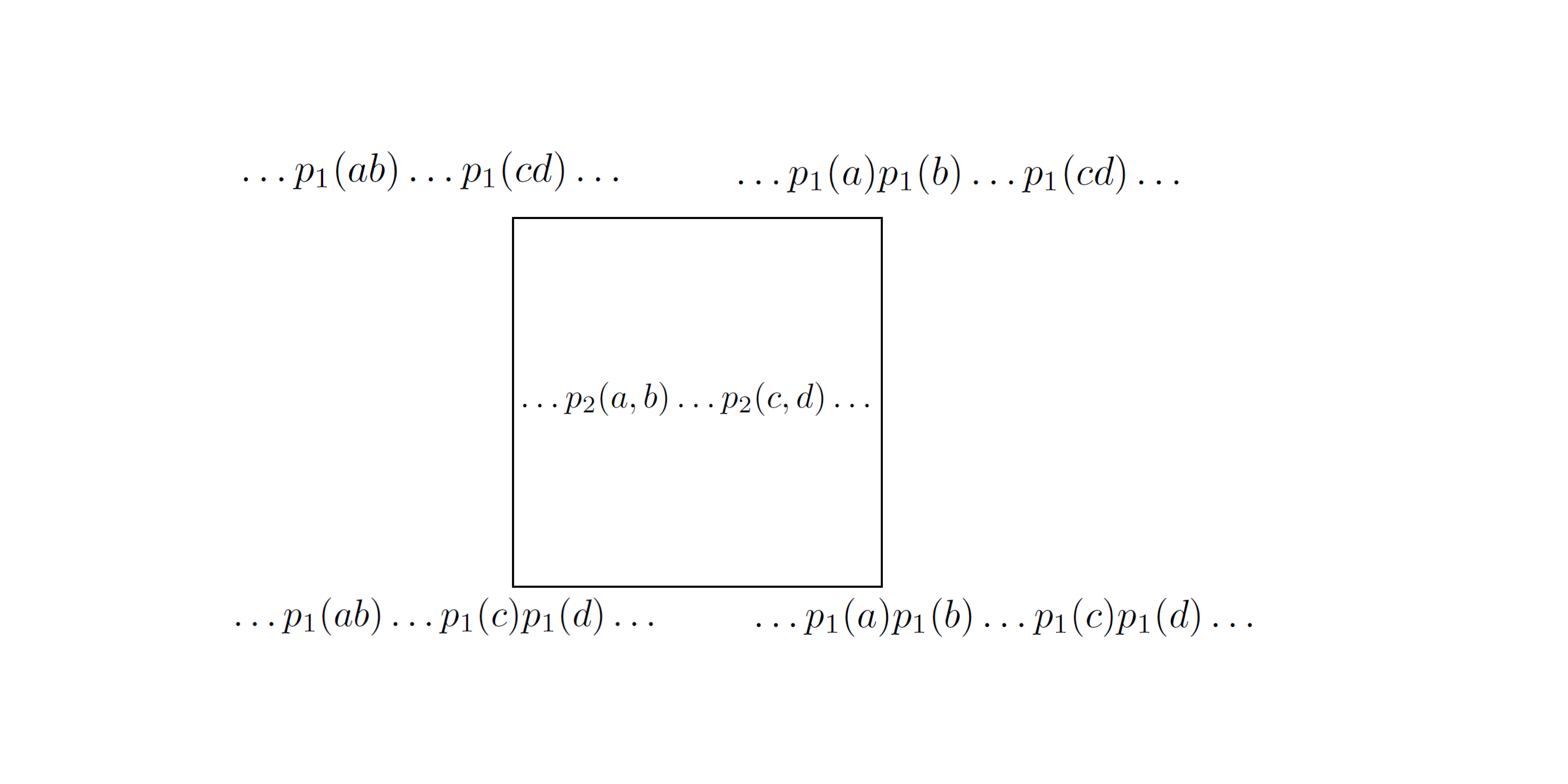}
\caption{\label{p2,p2}}
\end{figure}

We can now state the following theorem.

\begin{Theorem}
\label{thm:Comm}

Let $(A,d_A,\wedge_A)$ and $(B,d_B,\wedge_B)$ be two \textit{dgca}s. Let $p$ be a chain map from $A$ to $B$. Let $k_2$, $k_3$ and so on be the cumulants of $p$. Suppose $p$ is the first term of a $C_\infty$ morphism $(p, p_2, p_3, \ldots)$ where $p_n:A^{\otimes n}\rightarrow B$. Then the following statements hold.
\begin{itemize}
\item[i)] $p_2$ gives a homotopy from the second commutative cumulant $k_2$ to zero. All the higher cumulants $k_n$ are also homotopic to zero using maps created by $p_2$ and $p_1$.
\item[ii)] $p_3$ gives a homotopy between different ways of making $K_3$ homotopic to zero. For all the higher Boolean cumulants, homotopies between the multiple different ways of making them homotopic to zero are homotopic to each other using $p_3$, $p_2$ and $p_1$.
\item[iii)] In general any cycles that are created using the homotopies $\{p_j\}_{j=1}^n$ are made homotopic to zero using maps made by $\{p_j\}_{j=1}^{j+1}$.
\end{itemize}
\end{Theorem}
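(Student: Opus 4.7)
My approach is to treat the three parts in sequence, with part (iii) proved by induction that generalizes the arguments in (i) and (ii). Throughout, the engine is the full list of $C_\infty$-morphism relations for $p_{n+1}$: the underlying $A_\infty$-morphism identity together with the shuffle-vanishing identities, which together encode exactly the data needed to make the regular (as opposed to Boolean) cumulants collapse.

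For part (i), the statement for $k_2$ is immediate, since $k_2(a,b) = p(ab) - p(a)p(b)$ coincides with the Boolean cumulant $K_2$ and the first $A_\infty$-morphism relation reads $\partial p_2(a,b) = p(a \wedge_A b) - p(a) \wedge_B p(b)$. For $n \geq 3$, I would cite the preceding lemma: the graph $G_n$ associated with $k_n$ is connected with an even number of vertices, adjacent vertices carry opposite signs, and each edge corresponds to splitting a single block of a partition into two, a difference which is exactly $\partial p_2$ applied to two of the arguments and multiplied by the remaining $p$-images. Fixing any sign-compatible perfect matching on $G_n$ (or, more intrinsically, any spanning structure on $G_n$) yields an explicit primitive $H_n$ built from $p_1$ and $p_2$ with $\partial H_n = k_n$.

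For part (ii), the matching chosen above is non-canonical: two different matchings $M_1, M_2$ produce primitives $H_n^{M_1}, H_n^{M_2}$ whose difference is a cycle. At $n=3$, the $C_\infty$-morphism relation for $p_3$ consists of the $A_\infty$ identity that expresses $\partial p_3(a,b,c)$ in terms of $p, p_2$, and the products $\wedge_A, \wedge_B$, together with the $(1,2)$- and $(2,1)$-shuffle vanishing relations for $p_3$. Combining these appropriately exhibits the cycle $H_3^{M_1} - H_3^{M_2}$ as the boundary of a map built from $p_3, p_2, p_1$; this matches the cell-complex picture of Figures \ref{p3}--\ref{p2,p2}. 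For larger $n$, the same mechanism applies: differences between distinct matchings on $G_n$ are cycles killed by combinations of $p_3, p_2, p_1$ obtained by iterating the $n=3$ construction on subterms.

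Part (iii) is an induction on $n$. Assume that every cycle assembled from $\{p_j\}_{j \leq n-1}$ is the boundary of a map assembled from $\{p_j\}_{j \leq n}$. A new cycle at depth $n$ arises as the difference of two primitives, constructed via $p_j$ with $j \leq n$, of some depth-$(n-1)$ cycle. The $C_\infty$-morphism relation for $p_{n+1}$ — the $A_\infty$-morphism identity expressing $\partial p_{n+1}$ as a combination of lower $p_i$'s with the algebra products, together with the full family of shuffle relations for $p_{n+1}$ — will supply exactly the primitive. The technical heart of the argument, and the main obstacle, is the combinatorial bookkeeping: one must exhibit a polytope whose vertices are the competing primitives and whose top cell corresponds to $p_{n+1}$, and verify that its cellular boundary term-by-term matches the $A_\infty$-plus-shuffle identity. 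This polytope is a shuffled analogue of the Stasheff associahedron (morally a quotient giving a permutohedron-type cell complex), and the passage to the shuffle quotient is precisely the reason the $C_\infty$ relations, rather than the bare $A_\infty$ relations, are required. Tracking Koszul signs through this identification is the most delicate point.
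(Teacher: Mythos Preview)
Your proposal is essentially correct and follows the same strategy as the paper: build a CW complex whose $1$-skeleton is the graph $G_n$ from the preceding lemma, attach higher cells labelled by the maps appearing in the $A_\infty$-morphism boundary formula for $p_j$, and then observe that the residual (non-bounding) cycles are exactly shuffle sums of $p_j$'s, which vanish by the $C_\infty$ condition.

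The one noteworthy difference is geometric packaging. The paper organizes the complex $c_n$ explicitly as a \emph{cube complex}: the top cells are $(n-1)$-cubes indexed by the maps $p_1\cdots p_n\cdots p_1$ with permuted inputs, and intermediate $j$-cubes correspond to strings $p_1\cdots p_{k+1}\cdots p_1$ and $p_1\cdots p_{j_1}\cdots p_{j_2}\cdots p_1$, with the cube boundary matching the $A_\infty$-morphism relation $\partial p_n = \sum p_{n-1}(\cdots\wedge_A\cdots) - \sum \wedge_B(p_{n_1}\otimes p_{n_2})$. You instead invoke a ``shuffled associahedron / permutohedron-type'' polytope. These pictures are compatible (a permutohedron admits a cubical subdivision), but the paper's cube model has the advantage that the boundary of each cube is \emph{literally} the right-hand side of the $A_\infty$ relation, so no further combinatorial identification is needed; the only remaining point is that the gluing of the permuted cubes produces shuffle-sum cycles, which are zero. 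Your framing front-loads more of the combinatorics into identifying the polytope, which is why you flag the sign-and-boundary bookkeeping as the delicate step; in the paper's cube model that step largely evaporates.
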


\begin{proof}
We will construct an $n-1$ dimensional cube complex $c_n$ corresponding to $k_n$ whose one skeleton is $G_n$. We first attach two cells corresponding to maps of the types that are described in figures \ref{p3} and \ref{p2,p2}. The boundaries of those maps correspond to $2$-cycles in $G_n$ since the edges in $G_n$. 
For every $j$ less then $n$ we attach a $j$-cube corresponding to maps of the form $p_1\ldots p_{k+1} \ldots p_1$ and $p_1\ldots p_{j_1}\ldots p_{j_2}\ldots p_1$ attached along the cells corresponding to their boundaries. Recall that 

\[\partial(p_n) = d(p_n)-\sum_{k=1}^np_n(1\otimes\ldots d\ldots\otimes 1)\]\[=  \sum_{k=1}^{n-1}p_{n-1}(1\otimes\ldots \wedge_A\ldots\otimes 1)-\sum_{n_1+n_2=n} \wedge_B(p_{n_1}\otimes p_{n_2})\] and

\[\partial(\wedge_B(p_{n_1}\otimes p_{n_2})) = \wedge_B(\partial(p_{n_1})\otimes \partial(p_{n_2}))\] Thus the boundaries of the cubes correspond to the sum of lower dimensional cubes. 

The complex $c_n$ is constructed similarly to the complex $g_n$ constructed in the previous section. Since the terms of $k_n$ include all permutations of the inputs, $c_n$ consists of $n-1$ cubes corresponding to $p_n$ with permuted inputs, glued together in a certain way. Thus for some subset of permutations of $j$ elements, we have cycles of the form \[\ldots p_j(\sum (a_{\sigma(1)},a_{\sigma(2)}\ldots a_{\sigma(j)})\ldots \] Recall that by the definition of a $C_\infty$ morphism $p_j$ vanishes over the sum of all shuffle permutations adding up to length $j$. Thus the map corresponding to the above sums is zero.

\end{proof}

\section{Structure of a general $C_\infty$ morphism}

Much like in the case of $A_\infty$ algebras, for maps between $C_\infty$ algebras there are multiple ways of defining the cumulants that are all homotopic to each other. The multiple ways are because of multiple ways of associating the variables in a product. Since these are homotopic in a $C_\infty$ algebra, the multiple ways of defining the $k_n$ are homotopic to each other.

\begin{Theorem}
\label{thm:C_infty}
Let $A$ and $B$ be two $C_\infty$ algebras. Let $p$ be a chain map from $A$ to $B$. Let $k_2$, $k_3$ and so on be the Boolean cumulants of $p$ defined up to homotopy. Suppose $p$ is the first term of an $C_\infty$ morphism $(p, p_2, p_3, \ldots)$ where $p_n:A^{\otimes n}\rightarrow B$. Then the following statements hold.
\begin{itemize}
\item[i)] $p_2$ gives a homotopy from the second cumulant $k_2$ to zero. All the different ways of defining the higher cumulants $k_n$ are also homotopic to zero using maps created by $p_2$ and $p_1$.
\item[ii)] $p_3$ gives a homotopy between different ways of making $k_3$ homotopic to zero. For all the higher Boolean cumulants, homotopies between the multiple different ways of making them homotopic to zero are homotopic to each other using $p_3$, $p_2$ and $p_1$.
\item[iii)] In general any cycles that are created using the homotopies $\{p_j\}_{j=1}^n$ are made homotopic to zero using maps made by $\{p_j\}_{j=1}^{n+1}$.
\end{itemize}

\end{Theorem}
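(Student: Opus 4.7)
The plan is to adapt the proof of Theorem \ref{thm:Comm}: for each $n$, build a cube complex $c_n$ whose lower-dimensional cells encode homotopies between the various terms appearing in a choice of representative for $k_n$, and then show that the remaining top-dimensional boundary cycles are sums of $p_j$ evaluated on shuffle-symmetrised tuples, which vanish because $(p_1,p_2,\ldots)$ is a $C_\infty$ morphism. The new feature compared with the \textit{dgca} case is that the product in $A$ and $B$ is only associative up to the higher operations $m_j^A$ and $m_j^B$, so each term $\prod_{b\in\pi}p\bigl(\prod_{i\in b}a_i\bigr)$ in the formula for $k_n$ must be replaced by a whole family of terms parametrised by choices of bracketing, both inside each block and between blocks. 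This is exactly what is meant by ``$k_n$ defined up to homotopy,'' and the extra freedom must be absorbed into the $0$- and $1$-skeleton of $c_n$.

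Concretely, I would take the vertices of $c_n$ to be pairs $(\pi,\beta)$ of a partition $\pi$ and a choice of bracketing $\beta$, and introduce two kinds of $1$-cells: the splitting edges already present in $G_n$, witnessed by $p_2$, and new associator edges witnessed by the higher $C_\infty$ operations $m_j^A$ and $m_j^B$, which by definition supply homotopies between different bracketings. Because $G_n$ is connected and the associator edges link together the bracketing fibres, the augmented $1$-skeleton remains connected with vanishing vertex sum, establishing statement (i): $k_n$, in any bracketing, is a boundary built from $p_1$, $p_2$ and the $m^A$, $m^B$ operators.

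For the higher cells I would invoke the general boundary identity for a $C_\infty$ morphism between $C_\infty$ algebras,
\[
\partial(p_n) \;=\; \sum_{j,k} p_{n-j+1}\bigl(1\otimes\cdots\otimes m_{j}^A\otimes\cdots\otimes 1\bigr) \;-\; \sum_{s;\,n_1+\cdots+n_s=n} m_{s}^B\bigl(p_{n_1}\otimes\cdots\otimes p_{n_s}\bigr),
\]
together with the analogous identity for each $m_{s}^B(p_{n_1}\otimes\cdots\otimes p_{n_s})$. As in Theorem \ref{thm:Comm}, attaching a $j$-cube for every such map witnesses the homotopy among the lower-dimensional cells it bounds, and an induction on $n$ shows that any cycle assembled from $\{p_j\}_{j\le n}$ is bounded by a chain built from $\{p_j\}_{j\le n+1}$. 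At the top level the residual boundary is a sum of terms $p_j\bigl(a_{\sigma(1)},\ldots,a_{\sigma(j)}\bigr)$ over permutations $\sigma$, which, because of the symmetry of $k_n$ and of the cube complex, organise into the $(q,r)$-shuffle sums killed by the $C_\infty$ morphism condition on $(p_1,p_2,\ldots)$.

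The main obstacle will be the combinatorial bookkeeping needed to verify that the new associator cells supplied by $m_j^A$ and $m_j^B$ glue compatibly with the $p_j$-cubes, and that the residual top-dimensional cycles really do assemble into shuffle sums. I expect this to be a long but essentially formal extension of the corresponding step in Theorem \ref{thm:Comm}, relying on no new input beyond the $C_\infty$-morphism shuffle identity and the $C_\infty$-algebra shuffle identities on $A$ and $B$.
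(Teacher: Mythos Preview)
Your strategy is the same as the paper's: assemble a CW-complex for each $n$ out of cells labelled by the maps $p_j$ and the structure maps $m_j$, and then argue that the only cycles not already bounded are shuffle-symmetrised sums, which vanish by the $C_\infty$ conditions and can therefore be capped off by cells carrying the zero map. The paper is terser than you: rather than rebuilding the complex from scratch with vertices $(\pi,\beta)$, it imports wholesale the polyhedral complex constructed in Part~I for the $A_\infty$ (Boolean-cumulant) situation and then observes that the residual cycles are exactly the shuffle sums of both $m_{q+r}$ and $p_{q+r}$.

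Two small points where your write-up drifts from the paper. First, the paper speaks of \emph{polyhedra}, not cubes: once genuine higher operations $m_j^A,m_j^B$ are present, a cell labelled $m_s^B(p_{n_1}\otimes\cdots\otimes p_{n_s})$ has a boundary with more facets than a cube admits, so ``$j$-cube'' should be read loosely. Second, your explicit description of the residual top boundary names only permuted $p_j$ terms, whereas the paper lists both $p_{q+r}$ and $m_{q+r}$ shuffle sums as the cycles to be killed; you do invoke the $C_\infty$-algebra identities in your closing sentence, so this is an omission of emphasis rather than a gap in the argument.
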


\begin{proof}
Much like in the previous cases we construct a CW-complex for every $n$. In the case of a $C_\infty$ morphism between $C_\infty$ algebras the $n$th complex is made of the $n-1$ dimensional polyhedrons corresponding to the Boolean cumulants in the $A_\infty$ case. The cycles that aren't boundaries in this complex correspond to sums of $p_j$ and $m_j$ with permuted inputs. Recall that by the definition of $C_\infty$ algebras we have 

\[m_{q+r}(\sum_{\sigma \in (q,r)-\text{shuffles}} (x_{\sigma^{-1}(1)},x_{\sigma^{-1}(2)},\ldots,x_{\sigma^{-1}(q+r)}))=0\] and
\[p_{q+r}(\sum_{\sigma \in (q,r)-\text{shuffles}} (x_{\sigma^{-1}(1)},x_{\sigma^{-1}(2)},\ldots,x_{\sigma^{-1}(q+r)}))=0\] where $\mu$ is the shuffle product. Thus sums of cells corresponding to  $m_{q+r}$ and $p_{q+r}$ applied to shuffle products are cycles in the CW-complex. However these maps are also boundaries since they are indeed equal to zero. Thus we can add cells corresponding to the zero map whose boundaries are the above cycles. Thus the CW-complex is indeed contractible and the corresponding maps are boundaries.

\end{proof}

\nocite{*}

\bibliography{PaperII}
\bibliographystyle{plain}

\end{document}